\theoremstyle{plain}
\newtheorem{theorem}{Theorem}
\newtheorem{lemma}{Lemma}
\title[Runs of Consecutive Integers Having the Same Number of Divisors]{Runs of Consecutive Integers Having the Same Number of Divisors}
\address{V. T. Sp\u{a}taru \\
Bucharest, Romania}
\email{vtspataru@gmail.com} 
\author[Vlad-Titus Sp\u{a}taru]{\large Vlad-Titus Sp\u{a}taru}
\urladdr{}
\begin{document}

\begin{abstract}
Our objective is to provide an upper bound for the length $\ell_N$ of the longest run of consecutive integers smaller than $N$ which have the same number of divisors. We prove in an elementary way that $\log\ell_N\ll(\log N\log\log N)^\lambda$, where $\lambda=1/2$. Using estimates for the Jacobsthal function, we then improve the result to $\lambda=1/3$.
\end{abstract}

\maketitle

\section{Introduction}

Let $d(n)$ denote the number of positive divisors of $n$. The equation $d(n)=d(n+k)$ has been studied extensively. Spiro \cite{SP} showed that it has infinitely many solutions for $k=5040$. Subsequently, Heath-Brown \cite{HB} established the case $k=1$, and Pinner \cite{PN} ultimately proved that all values of $k$ yield infinitely many solutions.

As $d(n)$ is equal to $d(n+1)$ infinitely often, one naturally wonders \emph{how many} consecutive integers can there be, having the same number of divisors. Erd\H{o}s and Mirsky \cite{ER} conjectured that there are arbitrarily long such runs of integers. They were not able to provide any estimates for the length of such sequences: {\it``A related problem consists in the estimation of the longest run of consecutive integers $\leqslant x$ all of which have the same number of divisors. This problem seems to be one of exceptional difficulty, and we} [Erd\H{o}s \& Mirsky] \textit{have not been able to make any progress with it.''}

Our principal objective is to provide an upper bound for the length $\ell_N$ of the runs in question. In Section \ref{sec:2}, we estimate the order of magnitude of the $d(n)$ and $\omega(n)$ functions and obtain the following estimate in an elementary manner: 

\begin{theorem}\label{thm:1}
Let $\ell_N$ denote the length of the longest run of consecutive integers smaller than $N$, having the same number of divisors. Then, \[\log\ell_N\ll\sqrt{\log N\cdot \log\log N}.\]
\end{theorem}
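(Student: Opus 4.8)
The plan is to fix a putative run $n+1,\dots,n+\ell$ of integers below $N$ all sharing the same divisor count $D:=d(n+1)=\cdots=d(n+\ell)$, and to extract from the single number $D$ enough arithmetic structure to force $N$ to be large. Set $A:=\lfloor\log_2\ell\rfloor$. The first step extracts a strong divisibility property of $D$ from the powers of $2$ occurring in the run. Whenever $2^{k+1}\le\ell$ the interval contains two consecutive multiples of $2^k$, and of these exactly one has $2$-adic valuation equal to $k$: the other is divisible by $2^{k+1}$. That element has the form $2^k\cdot(\text{odd})$, so its divisor count is $(k+1)$ times the divisor count of an odd number; since this equals $D$ we get $(k+1)\mid D$. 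Letting $k+1$ run over $1,2,\dots,A$ yields $\mathrm{lcm}(1,2,\dots,A)\mid D$.

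Now write $m:=n+1=\prod_i q_i^{e_i}$, so that $D=d(m)=\prod_i(e_i+1)$. The divisibility just obtained means that every prime $p\le A$ divides $D$; I would use only the primes $p\in(A/2,A]$, of which there are $\asymp A/\log A$ by the prime number theorem. Each such $p$ must divide some factor $e_i+1$, and since $p>A/2$ this forces $e_i>A/2-1$. Two of these primes cannot divide the same factor unless $e_i+1>(A/2)^2$, which alone gives $\log m\gg A^2$; otherwise the primes occupy $\asymp A/\log A$ \emph{distinct} prime powers of $m$, each carrying an exponent exceeding $A/2-1$. Minimising over the base primes $q_i$ and invoking $\vartheta(x)\sim x$ and $p_k\sim k\log k$ then gives $\log m\gg A^2/\log A$. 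In either case $\log N>\log m\gg A^2/\log A$, that is $A^2\ll\log N\cdot\log A$. Since trivially $A<\log_2 N$, we have $\log A\ll\log\log N$, whence $A\ll\sqrt{\log N\log\log N}$ and therefore $\log\ell\ll\sqrt{\log N\log\log N}$, as claimed.

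The crux, and the step I expect to be the main obstacle, is the size estimate in the second paragraph: turning the divisibility $\mathrm{lcm}(1,\dots,A)\mid d(m)$ into a genuine lower bound for $m$. The essential phenomenon is that divisibility of the divisor count by the \emph{large} primes near $A$ cannot be bought cheaply: it is not enough for $d(m)$ to be large, since a squarefree $m$ with $r\asymp A$ prime factors already has $d(m)=2^{r}$ yet has no large prime factor at all. One is instead forced to spend large exponents, and the extremal configuration — which, fully optimised, governs the precise exponent — is controlled by the trade-off between the number of base primes used and the sizes of their exponents; making this rigorous with the elementary estimates for $\vartheta$, $\pi$ and $p_k$ is where the real work lies. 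By contrast, the softer route of bounding $D$ from above by its maximal order, $d(m)\le\exp\!\big((\log 2+o(1))\log N/\log\log N\big)$, combined with $D\ge\mathrm{lcm}(1,\dots,A)$, only delivers $\log\ell\ll\log N/\log\log N$, which is weaker than the target; so the size estimate is genuinely needed to reach the stated exponent.
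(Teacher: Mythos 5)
Your proof is correct, and after the opening move it takes a genuinely different route from the paper's. Both arguments begin identically: the powers of $2$ inside the run force $\mathrm{lcm}(1,2,\ldots,A)\mid D$ (the paper phrases this via residues modulo $2^K$, $K=\lfloor\log_2 k\rfloor$). From there the paper throws away the multiplicative structure of this divisibility and keeps only its size, $D\geqslant\mathrm{lcm}(1,\ldots,K)\geqslant 2^{K-1}\geqslant k/4$; its real work is global, spread over the whole run: since every exponent of $n+l\leqslant N$ is at most $\log_2 N$, one gets $\omega(n+l)\geqslant \log D/\log(\log_2 N+1)\gg \log k/\log\log N$ for each of the $k$ elements, Mertens' bound corrects for primes dividing several elements, and the factorial bound ($\omega(a)\geqslant b$ implies $a\geqslant b!$) played against $(n+1)\cdots(n+k)\leqslant N^k$ produces the contradiction. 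You instead keep the divisibility itself and work locally with a single element $m$: each prime $p\in(A/2,A]$ divides some $e_i+1$, so either two such primes land on the same factor (forcing one exponent $\gg A^2$) or else $\gg A/\log A$ distinct primes of $m$ carry exponents exceeding $A/2-1$. This dichotomy is sound, and your estimate in the second case is in fact better than you state: the $r\asymp A/\log A$ distinct base primes satisfy $\sum_i\log q_i\geqslant\log r!\gg A$ (no PNT needed here, since the $j$-th smallest of them is at least $j$), so $\log m\gg A^2$ rather than $A^2/\log A$, and your argument actually proves $\log\ell_N\ll\sqrt{\log N}$ --- strictly sharper than Theorem \ref{thm:1}, though still weaker than Theorem \ref{thm:2}. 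As for what each approach buys: yours is shorter, uses only one element of the run after the lcm step, avoids Mertens and Stirling altogether, and its sole prime-counting input, $\pi(A)-\pi(A/2)\gg A/\log A$, is available at the Chebyshev/Erd\H{o}s--Bertrand level, so it remains elementary; the paper's global counting of prime factors across the entire run is the template that recurs in its Section \ref{sec:3}, where the analogous bookkeeping with $\Omega$ yields the explicit constant $\sqrt{(1/2+o(1))\log N\log\log N}$.
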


Subsequently, in Section \ref{sec:3}, we provide a quicker proof of Theorem \ref{thm:1} based on the Prime Number Theorem. We also get the following explicit form of Theorem \ref{thm:1}: \[\log\ell_N\leqslant\sqrt{(1/2+o(1))\log N\log\log N}.\]

Finally, in Section \ref{sec:4} we prove a stronger version of Theorem \ref{thm:1} using estimates of the Jacobsthal function deduced from Brun's sieve method. We obtain the following result:

\begin{theorem}\label{thm:2}
Let $\ell_N$ denote the length of the longest run of consecutive integers smaller than $N$, having the same number of divisors. Then, \[\log\ell_N\ll\sqrt[3]{\log N\cdot \log\log N}.\]
\end{theorem}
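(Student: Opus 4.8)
The plan is to keep the two–sided control on the common divisor count that drives the proof of Theorem~\ref{thm:1}, but to replace the crude lower bound on this value by a \emph{structural} statement about the integers in the run, and then to finish with the Jacobsthal function. Fix a run of maximal length, $n+1,\dots,n+\ell_N$, and write $v$ for their common number of divisors.

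The first step extracts arithmetic rigidity from the run. Among $\ell_N$ consecutive integers, for every $t$ with $2^{t}\le\ell_N$ the window contains an integer whose exact power of $2$ is $2^{t}$, and for such an integer $(t+1)\mid d=v$. Hence $\mathrm{lcm}(1,2,\dots,\lfloor\log_2\ell_N\rfloor)\mid v$, so $v$ is divisible by some prime $\ell\asymp\log\ell_N$. Since $\ell\mid d(n+i)=v$ for \emph{every} $i$, each $n+i$ must carry some prime to an exponent $e$ with $\ell\mid e+1$; in particular $p^{\ell-1}\mid n+i$ for a prime $p$. Thus the entire run consists of consecutive integers none of which is $(\ell-1)$-free, with $k:=\ell-1\asymp\log\ell_N$.

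The second step turns this into a bound by a sieve argument in which the Jacobsthal function is the key input. Sieving the window by the powers $p^{k}$ with $p\le\ell_N^{1/k}$ through Brun's method, a positive proportion of the window carries no small $k$-th-power divisor, so those integers are each divisible by $p^{k}$ with $p$ large; as $p^{k}$ then exceeds the window length, each such prime is responsible for at most one member of the run. The admissible primes range up to $N^{1/k}$, and the question of how densely consecutive integers can each be caught by a distinct large $k$-th power is exactly what a Jacobsthal estimate controls: feeding in a polynomial bound of the shape $g\!\big(\prod_{p\le y}p\big)\ll y^{C}$ coming from Brun's sieve, together with $k\asymp\log\ell_N$, yields an inequality relating $\ell_N$, $k$ and $N$.

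The balancing of the free cutoff in the sieve against the forced exponent $k$ is where the cube-root exponent must emerge, and I expect this to be the main obstacle. The three constraints pull in different directions: the forced power $k$ grows like $\log\ell_N$, the reach of Brun's sieve is only polynomial, and the supply of usable primes is governed by $N^{1/k}$, so the cutoffs have to be chosen so that the loss in each estimate is of the same order. Making the Jacobsthal input uniform and optimizing these parameters should deliver $\log\ell_N\ll\sqrt[3]{\log N\log\log N}$, with the implied constant coming from the Prime Number Theorem estimates already used for Theorem~\ref{thm:1} together with the Brun bound for $g$.
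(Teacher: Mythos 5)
Your first step is sound, but it discards the decisive piece of information, and that loss caps your method at a square-root bound no matter how the sieve cutoffs are balanced. From the powers-of-two argument you retain only a \emph{single} prime $\ell\asymp\log\ell_N$ dividing $v$; the point the paper exploits is that \emph{every} prime $q\leqslant\log_2\ell_N$ divides $v$. To see the ceiling concretely, carry out your second step: almost all members of the run (a union bound suffices here --- Brun's sieve is not needed, since $\sum_{p}p^{-k}\ll 2^{-k}$ and $\pi(\ell_N^{1/k})=O(1)$) are divisible by $p^{k}$ for some prime $p>\ell_N^{1/k}$; each such $p^{k}$ exceeds the window length, so these primes are distinct, and each satisfies $p\leqslant N^{1/k}$. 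Hence $\pi(N^{1/k})\gg\ell_N$, which unwinds to $\log N\gg k\log\ell_N\asymp(\log\ell_N)^{2}$, i.e.\ only $\log\ell_N\ll\sqrt{\log N}$. Upgrading this with the Jacobsthal function in the paper's manner --- find one member $n+i$ coprime to $M\#$ with $M\approx\ell_N^{1/C}$, so all its prime factors exceed $M$ --- does not help either, because your single-prime information still only forces $p^{\ell-1}\mid n+i$ with $p>M$, giving $\log N\geqslant(\ell-1)\log M\gg(\log\ell_N)^{2}$: still the square root. The two quantities you can multiply together, a forced exponent $\asymp\log\ell_N$ and a forced prime size $\log p\gg\log\ell_N$, can never produce a cube. (Incidentally, the Jacobsthal function does not control ``how densely consecutive integers can be caught by distinct large $k$-th powers''; that is plain counting, as above. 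Its role is to produce a member of the run with large \emph{least} prime factor, which is a different service.)

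The missing idea is to use all the primes dividing $v$ simultaneously on the one integer whose least prime factor is large. Since every prime $q\leqslant\log_2\ell_N$ divides $v=d(n+i)$, Lemma \ref{lem:6} of the paper gives $\log(n+i)/\log p_{\min}\geqslant\sum_{p}(p-1)\nu_p(v)\geqslant\sum_{q\leqslant\log_2\ell_N}(q-1)$, and by the sum-of-primes asymptotic (Lemma \ref{lem:5}) this is $\sim(\log_2\ell_N)^{2}/(2\log\log_2\ell_N)$. In other words, that one integer has $\gg(\log\ell_N)^{2}/\log\log\ell_N$ prime factors counted with multiplicity --- not merely $\asymp\log\ell_N$ --- and all of them exceed $M$. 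Combined with $\log p_{\min}\geqslant\log M\gg\log\ell_N$, which follows from the Erd\H{o}s--Brun bound $j(M\#)\ll\pi(M)^{C}$ exactly as in your proposal, this yields $\log N\geqslant\log(n+i)\gg(\log\ell_N)^{3}/\log\log\ell_N$, and Theorem \ref{thm:2} follows. So your ingredients (the exact-power-of-two trick, Brun via Jacobsthal) are the right ones, but the cube root requires the inequality of Lemma \ref{lem:6}, or some equivalent device concentrating the contribution of \emph{all} small prime divisors of $v$ into the single integer selected by Jacobsthal; without it, the approach you describe proves a weaker bound than the one already obtained in Theorem \ref{thm:1}'s refinement, not Theorem \ref{thm:2}.
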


\section{An Elementary Proof of Theorem \ref{thm:1}}\label{sec:2}

In proving Theorem \ref{thm:1}, we will make use of the following lemmas, the first being proven in an elementary manner in \cite{FA} and the second being Mertens' bound.

\begin{lemma}\label{lem:1}
Let $n$ be a positive integer. Then, $\lcm(1,2,\ldots,n+1)\geqslant 2^n$.
\end{lemma}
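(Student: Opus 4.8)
The plan is to relate $L := \lcm(1,2,\ldots,n+1)$ to binomial coefficients through a Beta-type integral, and then to exploit the fact that the central binomial coefficient is comparable to $2^n$. The starting point is the identity $\int_0^1 x^k(1-x)^{n-k}\,dx = \frac{1}{(n+1)\binom{n}{k}}$, valid for $0\leqslant k\leqslant n$, which is the special case $B(k+1,\,n-k+1)$ of the Beta function evaluation.

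First I would expand $(1-x)^{n-k}$ by the binomial theorem and integrate term by term, obtaining $I_k := \int_0^1 x^k(1-x)^{n-k}\,dx = \sum_{j=0}^{n-k}(-1)^j\binom{n-k}{j}\frac{1}{k+j+1}$. Every denominator $k+j+1$ appearing on the right lies between $1$ and $n+1$, so multiplying through by $L$ clears all denominators; hence $L\cdot I_k$ is an integer. Since $I_k>0$, this integer is at least $1$, and comparing with the closed form above gives $L\geqslant (n+1)\binom{n}{k}$ for every admissible $k$.

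Next I would optimise over $k$. Choosing $k=\lfloor n/2\rfloor$ makes $\binom{n}{k}$ the largest among the $n+1$ binomial coefficients in row $n$, and since these sum to $2^n$ the largest is at least their average $2^n/(n+1)$. Substituting yields $L\geqslant (n+1)\cdot\frac{2^n}{n+1}=2^n$, which is exactly the claim.

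The only delicate point — and the step I would treat most carefully — is the integrality assertion that $L\cdot I_k$ is an integer. This rests on the observation that the binomial expansion of $I_k$ produces only the denominators $k+1,k+2,\ldots,n+1$, each of which divides $L$ by the definition of the least common multiple; no cancellation or sign analysis is required beyond the positivity of $I_k$, which is what forces the cleared integer to be at least $1$. Everything else reduces to a direct computation, so I expect no further obstacle.
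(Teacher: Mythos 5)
Your proof is correct and, unlike the paper, fully self-contained: the paper never proves Lemma \ref{lem:1} at all, but simply defers to the reference \cite{FA} for an elementary proof. Every step of your argument checks out. The evaluation $\int_0^1 x^k(1-x)^{n-k}\,dx=\frac{1}{(n+1)\binom{n}{k}}$ is the standard Beta integral; expanding $(1-x)^{n-k}$ and integrating termwise produces only the denominators $k+1,k+2,\ldots,n+1$, each of which divides $L=\lcm(1,2,\ldots,n+1)$, so $L\cdot I_k$ is an integer, and positivity of the integrand forces $L\cdot I_k\geqslant 1$, i.e.\ $L\geqslant (n+1)\binom{n}{k}$; finally $\binom{n}{\lfloor n/2\rfloor}\geqslant 2^n/(n+1)$ because the largest of the $n+1$ entries of row $n$ is at least their average. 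This is the classical integral argument (Nair's proof of a Chebyshev-type bound). The elementary proof the paper cites proceeds differently: it rests on the arithmetic identity $\lcm\bigl(\tbinom{n}{0},\tbinom{n}{1},\ldots,\tbinom{n}{n}\bigr)=\lcm(1,2,\ldots,n+1)/(n+1)$, after which the left-hand side is bounded below by the central binomial coefficient. Both routes funnel through the same final pigeonhole step on $\binom{n}{\lfloor n/2\rfloor}$, so they deliver the same bound; the difference is that your version replaces the lcm-of-binomials identity with the principle that a positive integer is at least $1$, at the cost of introducing an integral, while the cited proof stays entirely within the arithmetic of binomial coefficients. For the purposes of this paper either argument suffices, since only the inequality $\lcm(1,2,\ldots,K)\geqslant 2^{K-1}$ is ever used.
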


\begin{lemma}\label{lem:2}The sum of the reciprocals of the prime numbers not exceeding $n$ satisfies
\[\sum_{p\leqslant n}\frac{1}{p}=\log\log n+M+O\left(\frac{1}{\log n}\right)\ll\log\log n.\]
\end{lemma}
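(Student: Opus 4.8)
The plan is to deduce this from the sharper estimate $\sum_{p\leqslant n}\frac{\log p}{p}=\log n+O(1)$, which I will call \emph{Mertens' first theorem}, via a single application of Abel summation. The error term $O(1/\log n)$ and the existence of the constant $M$ both fall out naturally along this route, and the concluding bound $\ll\log\log n$ is then immediate.

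First I would establish Mertens' first theorem by comparing two evaluations of $\log(n!)$. On the one hand, Stirling's formula gives $\sum_{m\leqslant n}\log m=n\log n-n+O(\log n)$. On the other hand, Legendre's formula for the exponent of each prime in $n!$ yields
\[\sum_{m\leqslant n}\log m=\sum_{p^k\leqslant n}\left\lfloor\frac{n}{p^k}\right\rfloor\log p.\]
Isolating the $k=1$ terms and replacing $\lfloor n/p\rfloor$ by $n/p$ produces the main term $n\sum_{p\leqslant n}\frac{\log p}{p}$; the rounding error is bounded by $\sum_{p\leqslant n}\log p=\vartheta(n)=O(n)$ via Chebyshev, and the prime powers with $k\geqslant 2$ contribute only $O(n)$ as well. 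Dividing through by $n$ and rearranging gives $\sum_{p\leqslant n}\frac{\log p}{p}=\log n+O(1)$.

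Next I would run Abel summation. Writing $A(t)=\sum_{p\leqslant t}\frac{\log p}{p}=\log t+R(t)$ with $R(t)=O(1)$, and taking $f(t)=1/\log t$, partial summation gives
\[\sum_{p\leqslant n}\frac{1}{p}=\frac{A(n)}{\log n}+\int_2^n\frac{A(t)}{t(\log t)^2}\,dt.\]
The boundary term equals $1+O(1/\log n)$. Splitting the integral according to $A(t)=\log t+R(t)$, the main piece $\int_2^n\frac{dt}{t\log t}$ evaluates to $\log\log n$ up to a constant, while $\int_2^n\frac{R(t)}{t(\log t)^2}\,dt$ converges to a constant as $n\to\infty$ because $\int^\infty\frac{dt}{t(\log t)^2}<\infty$, its tail being $O(1/\log n)$. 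Collecting the constants into a single $M$ produces the claimed asymptotic.

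The main obstacle is the first step: everything downstream is routine partial summation, but obtaining $\sum_{p\leqslant n}\frac{\log p}{p}=\log n+O(1)$ cleanly requires the Chebyshev bound $\vartheta(n)=O(n)$ to control both the rounding errors and the higher prime-power contributions. If one prefers not to import Chebyshev as a black box, it can itself be extracted from the same $n!$ identity, so that the whole lemma ultimately rests only on Stirling's formula and elementary manipulations of Legendre's formula.
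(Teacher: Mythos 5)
Your proof is correct, but note that the paper does not actually prove this lemma: it is imported as a known result (``Mertens' bound''), stated only to be applied in Section~\ref{sec:2}, with no argument given. What you have written is the classical self-contained derivation of Mertens' second theorem: Stirling plus Legendre's formula yields $\sum_{p\leqslant n}\frac{\log p}{p}=\log n+O(1)$, and a single Abel summation with $f(t)=1/\log t$ then produces the main term $\log\log n$, the convergent constant $M$, and the $O(1/\log n)$ error, all exactly as you describe; the bound $\ll\log\log n$ is then immediate. The details check out --- the boundary term $A(n)/\log n=1+O(1/\log n)$, the main integral $\int_2^n\frac{dt}{t\log t}=\log\log n-\log\log 2$, the convergence of $\int_2^\infty\frac{R(t)}{t(\log t)^2}\,dt$ with tail $O(1/\log n)$, and the $O(n)$ control of both the rounding errors (via Chebyshev) and the prime powers $p^k$ with $k\geqslant 2$ (via the convergent series $\sum_p\frac{\log p}{p(p-1)}$). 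So your proposal supplies a genuine elementary proof where the paper relies on a citation, which is strictly more than the paper asks of this lemma; the only cost is the need for the Chebyshev-type input, which, as you correctly note, can itself be extracted from the same factorial identity.
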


Note that it suffices to prove that Theorem \ref{thm:1} holds for large enough $N$. Assume that there exist $k>2$ consecutive numbers smaller than $N$, having the same number of divisors. Let them be $n+1$, $n+2$, \ldots, $n+k$ and write \[d(n+1)=d(n+2)=\cdots=d(n+k)=D.\] We will firstly provide an estimate for $D$, in terms of $k$. For simplicity, let $K=\lfloor\log_2 k\rfloor$. 

As $k\geqslant 2^K$, all residues modulo $2^K$ are among $n+1$, $n+2$, \ldots, $n+k$. Therefore, for all $1\leqslant i\leqslant K-1$, there exists some $1\leqslant t_i\leqslant k$ such that $n+t_i\equiv 2^{i}\bmod{2^K}$. Consequently, $\nu_2(n+t_i)=i$, so $i+1$ divides $d(n+t_i)=D$. 

Hence, $D$ is divisible by $\lcm(1,2,\ldots, K)$. Using Lemma \ref{lem:1}, we infer that \[D\geqslant \lcm(1,2,\ldots, K)\geqslant 2^{K-1}.\] Recall that $K=\lfloor\log_2 k\rfloor\geqslant \log_2k-1$, so $D\geqslant k/4$.

Let $\omega(n)$ denote the number of distinct prime factors of $n$. Choose $1\leqslant l\leqslant k$ arbitrarily. As $n+l\leqslant N$, it follows that $\nu_p(n+l)\leqslant\log_p N\leqslant \log_2 N$ for all prime numbers $p$. Therefore, \[D=d(n+l)=\prod_p(\nu_p(n+l)+1)\leqslant\prod_{p\mid n+l}(\log_2 N+1)=(\log_2 N+1)^{\omega(n+l)}.\] 

Taking logarithms, it follows that $\omega(n+l)\geqslant \log D/\log(\log_2 N+1)$. Moreover, note that a prime number $p$ can divide at most $\lceil k/p\rceil\leqslant k/p+1$ numbers among $n+1$, \ldots, $n+k$. Therefore, using Lemma \ref{lem:2}, it follows that \[\omega((n+1)\cdots(n+k))\geqslant\sum_{i=1}^k\omega(n+i)-\sum_{p\leqslant k}\frac{k}{p}\geqslant\frac{k\log D}{\log(\log_2 N+1)}-C_1k\log\log k,\] 
for a suitable constant $C_1$. Further, we will write $\log(\log_2 N+1)\leqslant C_2\log\log N$, for some constant $C_2$. Recall that $D\geqslant k/4$, so we have \begin{equation}\label{eq:1}\omega((n+1)\cdots(n+k))\geqslant\frac{k\log (k/4)}{C_2\log\log N}-C_1k\log\log k.\end{equation}

Write the right-hand side of equation \ref{eq:1} as $k\cdot f_N(k)$. Clearly, if $\omega(a)\geqslant b$ then $a\geqslant b!$. Using this remark on equation \ref{eq:1}, we get $(n+1)\cdots(n+k)\geqslant\lceil k\cdot f_N(k)\rceil!$. Moreover, because $N^k\geqslant (n+1)\cdots(n+k)$, by applying the well-known inequality $\log t!\geqslant t\log t-t$, we have \begin{align}\label{eq:2}k\log N &\geqslant \log\left((n+1)\cdots (n+k)\right)\geqslant \log\left(\lceil k\cdot f_N(k)\rceil!\right) \nonumber \\ &\geqslant k\cdot f_N(k)\cdot\log(k\cdot f_N(k))-k\cdot f_N(k).\end{align} 

Finally, dividing equation \ref{eq:2} by $k$ we obtain \begin{equation}\label{eq:3}\log N\geqslant f_N(k)\cdot\log(k\cdot f_N(k))- f_N(k).\end{equation}

Define the interval $I_N=\left[\exp\left(C_1\cdot C_2\cdot\log\log N\right),\infty\right)$. Using standard arguments, one may infer that $f_N$ is increasing on $I_N$.

Let us suppose, for the sake of contradiction, that $k>\exp\left(C\sqrt{\log N \log\log N}\right)$, where $C>\max(\sqrt{C_2},C_1\cdot C_2)$. Firstly, note that since $\log N>\log\log N$ and $C>C_1\cdot C_2$ then $\exp\left(C\sqrt{\log N \log\log N}\right)$ and $k$ are in $I_N$. Therefore, we have \begin{align}\label{eq:4}f_N(k)&>f_N\left(\exp\left(C\sqrt{\log N \log\log N}\right)\right) \nonumber\\ &=\frac{C}{C_2}\sqrt{\frac{\log N}{\log\log N}}-\frac{\log 4}{C_2\log\log N}-C_1\log\left(C\sqrt{\log N\log\log N}\right).\end{align} Viewing equation \ref{eq:4} as a function in $N$, it is evident that for large enough $N$ (greater than some $N_1$) we also have $f_N(k)>e$. In what follows, we will assume that $N>N_1$.

As $f_N(k)>e$, it follows from equation \ref{eq:3} that $\log N\geqslant f_N(k)\cdot\log k$. Further, applying equation \ref{eq:4} and the estimate for $k$ and isolating the term $\log N$, we get \[\frac{C\log 4}{C_2}\sqrt{\frac{\log N}{\log\log N}}+C_1C\sqrt{\log N\log\log N}\log\left( C\sqrt{\log N\log\log N}\right)\geqslant\left(\frac{C^2}{C_2}-1\right)\log N.\] Recall that $C>\sqrt{C_2}$, so the latter inequality is absurd for large enough $N$ (greater than some $N_2$), as the left-hand side is asymptotically much smaller than $\log N$. Therefore, Theorem \ref{thm:1} holds for $N>\max(N_1,N_2)$ and $C>\max(\sqrt{C_2},C_1\cdot C_2)$.

\section{An Explicit Form of Theorem \ref{thm:1}}\label{sec:3}

Let $\Omega(n)$ denote the number of prime factors of $n$, counting multiplicities. Note that $2^{\omega(n)}\leqslant d(n)\leqslant 2^{\Omega(n)}$ for all positive integers $n$. Further, we have the following estimates: 

\begin{lemma}\label{lem:3}The product $n\#$ of the prime numbers not exceeding $n$ satisfies $\log n\#\sim n$.
\end{lemma}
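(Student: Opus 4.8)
The quantity $\log n\#$ is exactly the Chebyshev theta function, since
\begin{equation*}
\log n\#=\sum_{p\leqslant n}\log p=:\theta(n).
\end{equation*}
Thus the plan is to show that $\theta(n)\sim n$, which is one of the classical equivalent formulations of the Prime Number Theorem; I will deduce it from the Prime Number Theorem in the form $\pi(n)\sim n/\log n$, where $\pi$ denotes the prime-counting function. I would establish the asymptotic by proving matching upper and lower bounds on $\theta(n)/n$.

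For the upper bound, every prime $p\leqslant n$ satisfies $\log p\leqslant\log n$, so
\begin{equation*}
\theta(n)=\sum_{p\leqslant n}\log p\leqslant\pi(n)\log n\sim n,
\end{equation*}
which gives $\limsup_{n\to\infty}\theta(n)/n\leqslant 1$.

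For the lower bound I would discard the small primes and keep only those lying in a short initial segment below $n$. Fixing $\varepsilon\in(0,1)$ and restricting to $p\in(n^{1-\varepsilon},n]$, each surviving prime contributes $\log p>(1-\varepsilon)\log n$, whence
\begin{equation*}
\theta(n)\geqslant\sum_{n^{1-\varepsilon}<p\leqslant n}\log p\geqslant(1-\varepsilon)\log n\cdot\bigl(\pi(n)-\pi(n^{1-\varepsilon})\bigr).
\end{equation*}
Since $\pi(n)\sim n/\log n$ while $\pi(n^{1-\varepsilon})\sim n^{1-\varepsilon}/((1-\varepsilon)\log n)=o(n/\log n)$, the right-hand side is asymptotic to $(1-\varepsilon)n$, so $\liminf_{n\to\infty}\theta(n)/n\geqslant 1-\varepsilon$. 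Letting $\varepsilon\to 0$ yields $\liminf_{n\to\infty}\theta(n)/n\geqslant 1$, and combined with the upper bound this gives $\theta(n)\sim n$.

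The substantive input is the Prime Number Theorem itself, which I take as given; the remaining manipulations are elementary. The one step requiring a little care is the lower bound, where I must verify that the primes removed below $n^{1-\varepsilon}$ are asymptotically negligible in count — but this is immediate from $\pi(n^{1-\varepsilon})=o(n/\log n)$ — and that $\varepsilon$ may be sent to $0$ only after taking the limit in $n$. An alternative and more mechanical route is partial (Abel) summation, writing $\theta(n)=\pi(n)\log n-\int_2^n\pi(t)/t\,dt$ and estimating the integral as $O(n/\log n)=o(n)$ via the Prime Number Theorem; I would nonetheless prefer the two-sided bound above, as it avoids the integral estimate altogether.
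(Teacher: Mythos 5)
Your proof is correct. The paper states Lemma \ref{lem:3} without proof, treating it as a standard equivalent form of the Prime Number Theorem (Section \ref{sec:3} is explicitly ``based on the Prime Number Theorem''), and your two-sided bound deducing $\theta(n)\sim n$ from $\pi(n)\sim n/\log n$ is a standard, complete way of supplying exactly that deduction.
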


\begin{lemma}\label{lem:4}The sum of $1/\log p$ taken over the prime numbers not exceeding $n$ satisfies
\[\sum_{p\leqslant n}\frac{1}{\log p}=\frac{n}{(\log n)^2}+O\left(\frac{n\log\log n}{(\log n)^3}\right)=o(1)\cdot\frac{n}{\log n}.\]
\end{lemma}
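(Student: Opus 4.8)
The plan is to evaluate the sum by Abel summation, converting it into the Chebyshev function, whose asymptotics are supplied by Lemma \ref{lem:3}. Introduce $\theta(t)=\sum_{p\le t}\log p=\log(t\#)$, so that Lemma \ref{lem:3} reads $\theta(n)\sim n$. The summand carries a hidden factor $\log p$, since $\frac{1}{\log p}=\frac{\log p}{(\log p)^2}$, which is exactly the weight recorded by $\theta$. Writing the sum as a Riemann--Stieltjes integral $\int_{2^-}^n(\log t)^{-2}\,d\theta(t)$ and integrating by parts gives
\[\sum_{p\le n}\frac{1}{\log p}=\frac{\theta(n)}{(\log n)^2}+\int_2^n\frac{2\,\theta(t)}{t(\log t)^3}\,dt,\]
the lower boundary term vanishing because $\theta(t)=0$ for $t<2$.

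The boundary term produces the main term: by Lemma \ref{lem:3}, $\theta(n)\sim n$, so $\frac{\theta(n)}{(\log n)^2}\sim\frac{n}{(\log n)^2}$, which already has the claimed shape $\frac{1}{\log n}\cdot\frac{n}{\log n}=o(1)\cdot\frac{n}{\log n}$. It then remains to show the integral is of strictly smaller order. Using the Chebyshev bound $\theta(t)\ll t$, the integrand is $\ll(\log t)^{-3}$, which is bounded on all of $[2,n]$ (there is no singularity, as $\log 2>0$), so the integral is $\ll\int_2^n(\log t)^{-3}\,dt$. A single integration by parts gives $\int_2^n(\log t)^{-3}\,dt\sim n/(\log n)^3$, so the integral contributes only $O\!\left(n/(\log n)^3\right)$, smaller than the main term by a factor $1/\log n$.

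Collecting the two pieces yields $\sum_{p\le n}\frac{1}{\log p}=\frac{n}{(\log n)^2}+O\!\left(n/(\log n)^3\right)$, and the final equality $o(1)\cdot n/\log n$ is immediate. The one point to watch is the bookkeeping of the error term: a clean appeal to the Prime Number Theorem in fact gives the sharper error $O\!\left(n/(\log n)^3\right)$, which is subsumed by the stated $O\!\left(n\log\log n/(\log n)^3\right)$ since $\log\log n\ge 1$ for $n\ge e^e$; the weaker $\log\log n$ factor costs nothing in the application, where only the relation $o(1)\cdot n/\log n$ is needed. The main obstacle is thus essentially just the asymptotic evaluation of $\int_2^n(\log t)^{-3}\,dt$ and confirming that this integral does not corrupt the leading term --- a routine but necessary check.
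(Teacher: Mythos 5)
The paper itself states Lemma \ref{lem:4} without proof, quoting it as one of several standard estimates, so there is no in-paper argument to compare against; your proposal must stand on its own. Its skeleton does stand: writing the sum as $\int_{2^-}^n(\log t)^{-2}\,d\theta(t)$ and integrating by parts is the natural route, your identity
\[\sum_{p\leqslant n}\frac{1}{\log p}=\frac{\theta(n)}{(\log n)^2}+\int_2^n\frac{2\,\theta(t)}{t(\log t)^3}\,dt\]
is correct (the boundary term at $2$ indeed vanishes), and Chebyshev's bound $\theta(t)\ll t$ together with $\int_2^n(\log t)^{-3}\,dt\sim n/(\log n)^3$ correctly disposes of the integral. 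In particular the final assertion $\sum_{p\leqslant n}1/\log p=o(1)\cdot n/\log n$ --- which is the only part of the lemma the paper actually uses, in equation \ref{eq:6} --- is fully proved by your argument.

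The gap is in the middle equality. From Lemma \ref{lem:3} you know only $\theta(n)=(1+o(1))\,n$, and that yields
\[\frac{\theta(n)}{(\log n)^2}=\frac{n}{(\log n)^2}+o\!\left(\frac{n}{(\log n)^2}\right),\]
and an unspecified $o(n/(\log n)^2)$ error is \emph{not} of the form $O(n\log\log n/(\log n)^3)$: for instance, an error of size $n/((\log n)^2\log\log n)$ is perfectly compatible with Lemma \ref{lem:3}, yet exceeds the claimed error term by a factor of $(\log n)/(\log\log n)^2$. So your step ``collecting the two pieces yields $\frac{n}{(\log n)^2}+O(n/(\log n)^3)$'' does not follow from the inputs you invoked. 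To obtain the stated error term you need a quantitative form of the Prime Number Theorem, e.g.\ $\theta(n)=n+O(n/\log n)$, equivalently $\pi(n)=n/\log n+O(n/(\log n)^2)$, which is strictly stronger than the asymptotic equivalence of Lemma \ref{lem:3}; your parenthetical ``clean appeal to the Prime Number Theorem'' is really an appeal to this stronger statement, not to $\theta(n)\sim n$. The repair is one line (cite the PNT with error term for the boundary term, and insert $\theta(t)=t+O(t/\log t)$ into the integral), but as written the displayed error term is unsupported, even though everything the paper needs downstream survives intact.
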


Let $M$ be the greatest positive integer satisfying $M\#\leqslant k$. Then, there exists $1\leqslant m\leqslant k$ so that $M\#$ divides $n+m$. Hence, for any $1\leqslant i\leqslant k$ we have \[2^{\Omega(n+i)}\geqslant d(n+i)=d(n+m)\geqslant d(M\#)\geqslant 2^{\omega(M\#)}=2^{\pi(M)},\]so $\Omega(n+i)\geqslant\pi(M)$. It follows that $\Omega(n+1)+\Omega(n+2)+\cdots+\Omega(n+k)\geqslant k\pi(M)$.

Fix a prime number $p\leqslant k$. Then, for every exponent $t\leqslant \log_pN$ there are at most $\lceil k/p^t\rceil\leqslant k/p^t+1$ numbers divisible by $p^t$ among $n+1$, $n+2$, \ldots, $n+k$. Furthermore, if $t> \log_pN$, then $p^t>N$ so none of $n+1$, $n+2$, \ldots, $n+k$ are divisible by $p^t$.

Following the same double-counting technique as in Legendre's Theorem, we may infer \begin{equation}\label{eq:5}\sum_{i=1}^k\nu_p(n+i)\leqslant\sum_{t=1}^{\lfloor\log_pN\rfloor}\left(1+\frac{k}{p^t}\right)<\frac{\log N}{\log p}+\frac{k}{p-1}.\end{equation}

Now, observe that $n+i$ has at most $\log_k N$ prime factors greater than $k$, including multiplicities. Combining this observation with equation \ref{eq:5} and Lemmas \ref{lem:2} and \ref{lem:4}, we get \begin{align}\label{eq:6}k\pi(M)&\leqslant\sum_{i=1}^k\Omega(n+i)=\sum_{p>k}\sum_{i=1}^k\nu_p(n+i)+\sum_{p\leqslant k}\sum_{i=1}^k\nu_p(n+i) \nonumber \\ &\leqslant\frac{k\log N}{\log k}+\sum_{p\leqslant k}\left(\frac{\log N}{\log p}+\frac{k}{p-1}\right)=(1+o(1))\frac{k\log N}{\log k}+O(k\log\log k).\end{align}

It follows from Lemma \ref{lem:3} and the Prime Number Theorem that $\pi(M)\sim\log k/\log\log k$. Further, note that $k\log\log k= o(1)\cdot k\log k/\log\log k$. Using these observations in equation \ref{eq:6} we get \[\frac{k\log k}{\log\log k}\sim k\pi(M)\leqslant (1+o(1))\frac{k\log N}{\log k}+o(1)\frac{k\log k}{\log\log k}.\]

Dividing through $k$ and isolating the remaining functions in $k$ on the left-hand side, we have $(\log k)^2/\log\log k\leqslant (1+o(1))\log N.$ Using standard arguments, we finally get \[\log k\leqslant\sqrt{(1/2+o(1))\log N\log\log N}.\]

\section{The Proof of Theorem \ref{thm:2}}\label{sec:4}

We will keep the notation used in Section \ref{sec:2}. We will require the following estimates:

\begin{lemma}\label{lem:5}
The sum of the prime numbers not exceeding $n$ satisfies \[\sum_{p\leqslant n}p=\frac{n^2}{2\log n}+O\left(\frac{n^2}{(\log n)^2}\right)\sim\frac{n^2}{2\log n}.\]
\end{lemma}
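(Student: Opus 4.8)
The plan is to deduce the estimate from the Prime Number Theorem in the quantitative form $\pi(t)=t/\log t+O\!\left(t/(\log t)^2\right)$ together with Abel's summation formula. First I would express the sum as a Stieltjes integral against the prime-counting function and integrate by parts, which yields
\[\sum_{p\leqslant n}p=n\,\pi(n)-\int_2^n\pi(t)\,dt.\]
The boundary term is handled at once: inserting the Prime Number Theorem gives $n\,\pi(n)=n^2/\log n+O\!\left(n^2/(\log n)^2\right)$. It therefore remains to show that $\int_2^n\pi(t)\,dt=\tfrac{n^2}{2\log n}+O\!\left(n^2/(\log n)^2\right)$, since subtracting this from $n^2/\log n$ leaves exactly $\tfrac{n^2}{2\log n}$ plus an admissible error.

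For the integral I would substitute the same asymptotic for $\pi(t)$, reducing the task to evaluating the main integral $\int_2^n t/\log t\,dt$ and to bounding two copies of the error integral $\int_2^n t/(\log t)^2\,dt$. Integrating the main integral by parts with $u=1/\log t$ and $dv=t\,dt$ gives
\[\int_2^n\frac{t}{\log t}\,dt=\frac{n^2}{2\log n}+\frac12\int_2^n\frac{t}{(\log t)^2}\,dt+O(1),\]
so everything comes down to the single estimate $\int_2^n t/(\log t)^2\,dt=O\!\left(n^2/(\log n)^2\right)$.

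This bound is the only genuine obstacle, and I would dispatch it by splitting the range at $\sqrt n$. On $[2,\sqrt n]$ the denominator is bounded below by the constant $(\log 2)^2$, so that part contributes at most a constant multiple of $\int_2^{\sqrt n}t\,dt=O(n)$, which is negligible against $n^2/(\log n)^2$. On $[\sqrt n,n]$ one has $\log t\geqslant\tfrac12\log n$, hence the integrand is at most $4t/(\log n)^2$, and integrating gives a contribution $O\!\left(n^2/(\log n)^2\right)$. Combining the two ranges establishes the error bound, and assembling the pieces yields $\sum_{p\leqslant n}p=\tfrac{n^2}{2\log n}+O\!\left(n^2/(\log n)^2\right)$, as claimed. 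The asymptotic relation $\sim\tfrac{n^2}{2\log n}$ is then immediate.
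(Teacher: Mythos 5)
Your proof is correct. The paper does not actually prove Lemma 5 --- it is stated without proof as a standard estimate (like Lemmas 2--4), being a classical consequence of the Prime Number Theorem. Your argument --- Abel summation to reduce to $n\,\pi(n)-\int_2^n\pi(t)\,dt$, insertion of $\pi(t)=t/\log t+O\bigl(t/(\log t)^2\bigr)$, integration by parts on the main term, and the split at $\sqrt{n}$ to bound $\int_2^n t/(\log t)^2\,dt$ --- is precisely the standard derivation, and every step checks out.
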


\begin{lemma}\label{lem:6}
Let $n$ be a positive integer and $p_{\text{min}}$ be its smallest prime divisor. Then, \[\frac{\log n}{\log p_{\text{min}}}\geqslant\sum_{p}(p-1)\nu_p(d(n)).\]
\end{lemma}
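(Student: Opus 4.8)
The plan is to factor $n=\prod_{i} q_i^{a_i}$ into distinct primes $q_i$ with exponents $a_i\geqslant 1$, so that $d(n)=\prod_i(a_i+1)$ and $p_{\text{min}}=\min_i q_i$. Since the map $m\mapsto\sum_p(p-1)\nu_p(m)$ is completely additive, I would first rewrite the right-hand side as
\[\sum_p(p-1)\nu_p(d(n))=\sum_p(p-1)\sum_i\nu_p(a_i+1)=\sum_i\Big(\sum_p(p-1)\nu_p(a_i+1)\Big).\]
This reduces the lemma to a purely numerical bound on each factor $a_i+1$, decoupling the arithmetic of the exponents from the sizes of the primes $q_i$.

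The crux --- and the step I expect to be the main obstacle --- is to prove that for every integer $m\geqslant 2$,
\[g(m):=\sum_p(p-1)\nu_p(m)\leqslant m-1.\]
I would attack this in two stages. First, for a prime power I claim $g(p^e)=(p-1)e\leqslant p^e-1$; this follows from the factorization $p^e-1=(p-1)(1+p+\cdots+p^{e-1})$, whose second factor is a sum of $e$ terms each at least $1$. Second, writing $m=\prod_i p_i^{e_i}$ and using additivity of $g$, it remains to combine the prime-power bounds via the elementary inequality $\sum_i(x_i-1)\leqslant\prod_i x_i-1$, valid for integers $x_i\geqslant 1$. The latter is a quick induction: if $P$ denotes the running product, the inductive step reduces to the nonnegativity of $(P-1)(x-1)$. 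Applying this with $x_i=p_i^{e_i}$ yields $g(m)\leqslant\sum_i(p_i^{e_i}-1)\leqslant m-1$.

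With the bound $g(a_i+1)\leqslant a_i$ in hand, summing over $i$ gives $\sum_p(p-1)\nu_p(d(n))\leqslant\sum_i a_i$. To finish I would compare this with the left-hand side: since each $q_i\geqslant p_{\text{min}}$, we have $\log q_i/\log p_{\text{min}}\geqslant 1$, whence
\[\frac{\log n}{\log p_{\text{min}}}=\sum_i a_i\,\frac{\log q_i}{\log p_{\text{min}}}\geqslant\sum_i a_i\geqslant\sum_p(p-1)\nu_p(d(n)),\]
which is exactly the asserted inequality. The only genuinely nontrivial ingredient is the additive bound $g(m)\leqslant m-1$; everything else is bookkeeping together with the two short elementary estimates above.
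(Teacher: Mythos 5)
Your proof is correct and is essentially the paper's own argument: your key bound $g(m)\leqslant m-1$ is exactly the paper's inequality $k\geqslant\sum_p\left(p^{\nu_p(k+1)}-1\right)\geqslant\sum_p(p-1)\nu_p(k+1)$, established from the same two ingredients (the prime-power bound $p^e-1\geqslant(p-1)e$ and the product-versus-sum inequality $\prod_i x_i-1\geqslant\sum_i(x_i-1)$), and your appeal to complete additivity of $g$ corresponds precisely to the paper's interchange of the sums over $p$ and $q$. The only difference is organizational: you name the completely additive function $g$ and apply the bound to the factors $a_i+1$ of $d(n)$, whereas the paper applies the same inequality to each exponent $\nu_q(n)$ and then swaps the order of summation.
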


\begin{proof}
Throughout the rest of the proof, the letters $p$ and $q$ will refer strictly to prime numbers. Note that since $p_{\text{min}}$ is the smallest prime factor of $n$, by taking logarithms we get\begin{equation}\label{eq:7}\log n=\sum_q \nu_q(n)\log q\geqslant \log p_{\text{min}}\sum_q\nu_q(n).\end{equation}Further, using the fact that $m^n-1\geqslant n(m-1)$ for all positive integers, we infer that \begin{equation}\label{eq:8}k=\prod_{p}p^{\nu_p(k+1)}-1\geqslant\sum_p \left(p^{\nu_p(k+1)}-1\right)\geqslant\sum_p (p-1)\nu_p(k+1)\end{equation}for any positive integer $k$. Using inequality \ref{eq:8} on $\nu_q(n)$ in equation \ref{eq:7}, we further have \begin{align*}\frac{\log n}{\log p_{\text{min}}}&\geqslant\sum_q \nu_q(n)\geqslant\sum_q\sum_p (p-1)\nu_p(\nu_q(n)+1)=\sum_p\left((p-1)\sum_q\nu_p(\nu_q(n)+1)\right) \\ &=\sum_p(p-1)\nu_p\left(\prod_q(\nu_q(n)+1)\right)=\sum_p (p-1)\nu_p(d(n)),\end{align*}giving us the desired result.
\end{proof}

The proof now hinges on finding an index $i$ for which $n+i$ has a large minimal prime factor. Jacobsthal \cite{JA} defines the function $j(n)$  to be the least integer so that amongst any $j(n)$ consecutive integers there exists at least one relatively prime to $n$. 

Therefore, if a positive integer $M$ satisfies $j(M\#)\leqslant k$, then some $n+i$ has the minimal prime factor larger than $M$. As pointed out by Erd\H{o}s \cite{ER2}, it follows directly from Brun's sieve that $j(n)\ll \omega(n)^C$ for a suitable constant $C$, hence $\log j(n)\ll \log \omega(n)$.

\newpage

It follows that $\log j(M\#)\ll\log\omega(M\#)=\log\pi(M)\ll\log M$, so among our $k$ consecutive integers we may find one, $n+i$, whose minimal prime factor $p_{\text{min}}$ satisfies $\log p_{\text{min}}\gg \log k$. Further, recall that every prime number not exceeding $\log_2k$ divides $D=d(n+i)$. Applying Lemmas \ref{lem:5} and \ref{lem:6} we then get \[\label{eg:9}\frac{\log N}{\log k}\gg\frac{\log (n+i)}{\log p_{\text{min}}}\geqslant \sum_{p}(p-1)\nu_p(D)\gg\sum_{p\leqslant \log_2k}p\sim\frac{(\log_2 k)^2}{2\log\log_2 k}.\]Consequently, $\log N\gg (\log k)^3/\log\log k$, from which Theorem \ref{thm:2} easily follows.

\section*{Acknowledgments}

I would like to express my deep gratitude to Andrew Granville, for his support and for his contribution \cite{GR} with the material presented throughout Sections \ref{sec:3} and \ref{sec:4}. Additionally, I would like to thank Alexandru Gica for his constant guidance, his insightful comments, and for proofreading my paper.

\bibliography{main}
\bibliographystyle{aomalpha}

\end{document}